\documentclass{amsart}

\usepackage{amsmath,amssymb,amsthm,graphicx}

\newtheorem{theorem}{Theorem}
\newtheorem{lemma}[theorem]{Lemma}

\newtheorem{dus}[theorem]{Corollary}

\theoremstyle{definition}
\newtheorem{definition}[theorem]{Definition}
\newtheorem{example}[theorem]{Example}

\theoremstyle{remark}

\numberwithin{equation}{section}

\begin{document}

\title{Local reflexion spaces}

\author{Jan Gregorovi\v c}

\keywords{}

\dedicatory{}

\begin{abstract}
A reflexion space is generalization of a symmetric space introduced by O. Loos in \cite{odk1}. We generalize locally symmetric spaces to local reflexion spaces in the similar way. We investigate, when local reflexion spaces are equivalently given by a locally flat Cartan connection of certain type.
\end{abstract}

\maketitle

There are several equivalent definitions of symmetric spaces and locally symmetric spaces. For example, an (affine) locally symmetric space is a connected smooth manifold $M$ with a torsion-free linear connection with parallel curvature. Another definition is, that a (homogeneous) locally symmetric spaces is a locally flat Cartan geometry of type $(G,H)$ on a connected manifold $M$ if there is $h\in H$ such, that $h^2=id_G$ and $H$ is open in the centralizer of $h$ in $G$. The equivalence of these two definitions can be found for example in \cite{odk6}.

The reflexion spaces were introduced by O. Loos in \cite{odk1}. He found, that reflexion spaces are equivalent to fibre bundles associated to homogeneous symmetric space $G\to G/H$. If the Lie group $G$ acts transitively on the reflexion space or equivalently $H$ acts transitively on the fiber, then if we denote $K$ stabilizer of one point of the reflexion space, the structure of reflexion space is equivalently given by a Maurer-Cartan form of $G\to G/K$ i.e. by a flat Cartan connection of type $(G,K)$.

Now, we introduce a local version of the reflexion spaces and investigate, under which conditions they are equivalently given by a locally flat Cartan connection of certain type.

\begin{definition}
Let $M$ be a connected smooth manifold, $N$ a neighborhood of the diagonal in $M\times M$ and $S: N \to M$ a smooth mapping. We denote 
\[ S(x,y)=S_xy=S^yx\]
and we say that $S_x$ is a (local) reflexion at $x$. We call $(M,S)$ a local reflexion space under the following three conditions:
\begin{itemize}
\item[(A1)] $S_xx=x$

\item[(A2)] If $U_x:=\{y: (x,y)\in N\}$, then $S_x$ is a diffeomorphism of $U_x$ satisfying $S_x(S_xy)=y$ for all $y\in U_x$.

\item[(A3)] There is a neighborhood $W$ of the diagonal in $M\times M\times M$ such, that 
\[ S_xS(y,z)=S(S_xy,S_xz)\]
holds for all $(x,y,z)\in W$.
\end{itemize}

Let $(M,S)$ and $(M',S')$ be two local reflexion spaces and $U\subset M$. Then $f: U \to M'$ is a local homomorphism of local reflexion spaces (we will say only homomorphism), if $f((U\times U) \cap N)\subset N'$ and
\[ f(S_xy)=S'_{f(x)}f(y)\]
for $(x,y)\in (U\times U) \cap N$.
\end{definition}

The meaning of conditions (A2) and (A3) is, that all (local) reflexions have to be involutive local automorphisms of local reflexion spaces.

There are the following examples of local reflexion spaces:

\begin{example}
Let $(p: \mathcal{G}\to M, \omega)$ be a locally flat Cartan geometry of type $(G,K)$ and assume, that there is $h\in K$ satisfying $h^2=id_G$ and $hk=kh$ for any $k\in K$. 

Since the Cartan geometry is locally flat, there is an atlas of $M$ such, that the images of charts are open subsets of $G/K$ and transition maps are restrictions of left actions of elements of $G$. In particular for all $x\in M$, there are local coordinates $V_x \subset G/K$ of some neighborhood of $x$ centered at $x$.

We denote $\bar{V}_x\subset \mathfrak{g}$ some neighborhood of $0$ such, that of $p(\exp(\bar{V}_x))\subset V_x$ and let $\bar{U}_x \subset \bar{V}_x$ be such, that 
\[ \exp(X)\exp(Ad(h)(-X))\exp(Y)K\in V_x\]
for all $X,Y\in \bar{U}_x$. Then we define 
\[ N:=\bigcup_{x\in M}(p(\exp(\bar{U}_x)),p(\exp(\bar{U}_x))),\]
so $N$ is a neighborhood of diagonal in $M\times M$ and we define 
\[S_{\exp(X)K}{\exp(Y)K}:=\exp(X)\exp(Ad(h)(-X))\exp(Y)K.\]
Since $K$ commutes with $h$, the definition is correct.

We show, that $(M,S)$ is a local reflexion space. Let $tfK, tgK\in p(\exp(\bar{U}_x))$ and $fK, gK \in p(\exp(\bar{U}_y))$ be two different coordinates of the same points of $M$, where the transition map between those coordinates is a left action of $t\in G$, then 
\[ S_{tfK}tgK=tfhf^{-1}t^{-1}tgK=tS_{fK}gK\]
i.e. the definition of $S$ does not depend on the choice of coordinates.

Let $\bar{W}_x \subset \bar{U}_x$ be such, that 
\[ \exp(X)\exp(Ad(h)(-X))\exp(Ad(h)Y)\exp(-Y)\exp(Z)K\in V_x\]
for all $X,Y,Z\in \bar{W}_x$.  We define
\[ W=\bigcup_{x\in M} (p(\exp(\bar{W}_x)),p(\exp(\bar{W}_x)),p(\exp(\bar{W}_x))).\]
Checking that (A1), (A2) and (A3) holds, is then an easy computation. 

For later use, we will notice that we can reconstruct the local Cartan geometry, under certain conditions. Consider the one parameter subgroup $f_t=\exp(tX)$. Then
\[ \frac{d}{dt}|_{t=0}S_{f_tK}S_{eK}gK=R_X(gK)-R_{Ad(h)X}(gK),\]
where $R_X$ is the projection of right invariant vector field of $X\in \bar{W}_x$ on $p(\exp(\bar{W}_x))$. Since $h^2=id_G$, we denote $\mathfrak{g}^-$ the $-1$ eigenspace of $Ad(h)$. Then for $X\in \mathfrak{g}^-$ is $\frac{d}{dt}|_{t=0}S_{f_tK}S_{eK}gK=2R_X(gK)$ i.e. 
\[ S_{\exp(X)K}S_{eK}gK=\exp(2X)gK.\]
Thus if $\mathfrak{g}^-$ generates the Lie algebra $\mathfrak{g}$ by the Lie bracket, we can reconstruct the right invariant vector fields from $S_xS_e$ action i.e. we can reconstruct locally flat Cartan geometry of type $(\mathfrak{g},\mathfrak{k})$.
\end{example}

We choose the following representative for the equivalence class of the Cartan geometries obtained in the example:

\begin{definition}
We say that a local reflexion space $(M,S)$ is locally homogeneous, if it is locally equivalent (as in previous example) to a locally flat Cartan geometry $(p: \mathcal{G}\to M, \omega)$ of type $(G,K)$ such, that

\begin{itemize}
\item[(H1)] there is $h\in K$ such, that $h^2=id_G$, $hk=kh$ for any $k\in K$

\item[(H2)]  the  $-1$ eigenspace of $Ad(h)$ in $\mathfrak{g}$ generates whole $\mathfrak{g}$ by the Lie bracket

\item[(H3)] $G/K$ is connected, simply connected and the maximal normal subgroup of $G$ contained in $K$ is trivial.
\end{itemize}
\end{definition}

We are interested, when are the local reflexion spaces locally homogeneous? The answer is the following:

\begin{theorem}
Let $(M,S)$ be a local reflexion space and let $\mathfrak{g}_x$ be a Lie subalgebra of Lie algebra of vector fields on some neighborhood of $x\in M$ generated by $\frac{d}{dt}|_{t=0}S_{x(t)}S_x,$ where $x(t)$ is a smooth curve such, that $x(0)=x$. If for any $x\in M$ is $\mathfrak{g}_x(x)=T_xM$, then $(M,S)$ is a locally homogeneous local reflexion space.
\end{theorem}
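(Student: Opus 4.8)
The plan is to produce, from the infinitesimally transitive data, a finite-dimensional Lie algebra of infinitesimal automorphisms, to integrate it to a local transitive action, and then to extract the pair $(G,K)$ together with the involution $h$, so that the reconstruction carried out at the end of the Example identifies the resulting locally flat Cartan geometry with $(M,S)$.

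First I would set up the infinitesimal automorphisms and the involution. By (A3) each reflexion $S_x$ is an involutive local automorphism, hence so is each $S_{x(t)}S_x$; since the latter is the identity at $t=0$, its derivative $\frac{d}{dt}|_{t=0}S_{x(t)}S_x$ is an infinitesimal automorphism of $(M,S)$, so all of $\mathfrak g_x$ lies in the Lie algebra $\mathrm{aut}(M,S)$ of such fields. Pushforward by $S_x$ gives an involutive Lie-algebra automorphism $\sigma_x=(S_x)_*$, and because $S_xS_{x(t)}=(S_{x(t)}S_x)^{-1}$ by (A2), the generators satisfy $\sigma_x\xi=-\xi$. Thus $\mathfrak g_x$ is $\sigma_x$-invariant and splits as $\mathfrak g_x=\mathfrak k_x\oplus\mathfrak m_x$ into the $(\pm1)$-eigenspaces, with $\mathfrak m_x$ containing the generators and hence generating $\mathfrak g_x$; this is the algebraic counterpart of the decomposition $\mathfrak g=\mathfrak g^+\oplus\mathfrak g^-$ of the Example.

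The main obstacle is to show $\dim\mathfrak g_x<\infty$, i.e. that the structure is of finite type. I would reduce this to the rigidity statement that a local automorphism $\phi$ with $\phi(x)=x$ and $d\phi_x=\mathrm{id}$ is the identity near $x$: granting it, the flow of any $\xi\in\mathfrak g_x$ whose $1$-jet at $x$ vanishes consists of such automorphisms, so $\xi=0$, and the $1$-jet map embeds $\mathfrak g_x$ into $T_xM\oplus\mathfrak{gl}(T_xM)$. For the rigidity statement I would use the transvection flows $\psi^\eta_s$, $\eta\in\mathfrak m_x$: an automorphism fixing $x$ conjugates $\psi^\eta_s$ to $\psi^{\phi_*\eta}_s$, the orbit $s\mapsto\psi^\eta_s(x)$ has initial velocity $\eta(x)$, and by the hypothesis $\mathfrak g_x(x)=T_xM$ these orbits sweep out a full neighbourhood of $x$; the condition $d\phi_x=\mathrm{id}$ then forces $\phi$ to fix each orbit, whence $\phi=\mathrm{id}$. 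The delicate point, where I expect to spend the most effort, is controlling $\phi_*\eta$ along the orbits; the clean route is to manufacture from $S$ an $\mathrm{aut}(M,S)$-invariant linear connection whose geodesics are the transvection orbits, so that automorphisms become affine maps and the classical $1$-jet determinacy of affine transformations applies.

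With $\mathfrak g_x$ finite-dimensional, transitive and consisting of infinitesimal automorphisms, I would integrate it by the local form of Lie's theorems (Palais) to a local Lie group $G$, with $\mathrm{Lie}(G)\cong\mathfrak g_x$, acting locally transitively near $x$ with isotropy $K$; the Maurer--Cartan form then furnishes a locally flat Cartan connection of type $(G,K)$, and the local models glue over $M$ since transvections carry the algebras $\mathfrak g_x$ into one another, producing transition maps in $G$. The involution $\sigma_x$ integrates to $h\in K$ with $h^2=\mathrm{id}$, and since any automorphism $\phi$ fixing $x$ satisfies $\phi S_x\phi^{-1}=S_{\phi(x)}=S_x$, the element $h$ is central in $K$; this gives (H1). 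Condition (H2) is immediate because $\mathfrak g^-=\mathfrak m_x$ generates $\mathfrak g$, and (H3) is arranged by passing to the connected, simply connected group with Lie algebra $\mathfrak g$ and dividing by the maximal normal subgroup contained in $K$, which alters neither the local geometry nor $S$. Comparing with the reconstruction at the end of the Example identifies this geometry with $(M,S)$, so $(M,S)$ is locally homogeneous.
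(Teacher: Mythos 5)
The critical step in your proposal --- and the one genuine gap --- is finite-dimensionality of $\mathfrak{g}_x$. You reduce it to the rigidity statement that an automorphism $\phi$ with $\phi(x)=x$, $d\phi_x=\mathrm{id}$ is the identity, and you propose to prove rigidity by manufacturing from $S$ an automorphism-invariant linear connection whose geodesics are the transvection orbits. That construction is exactly what a reflexion space does not provide. Unlike a symmetric space, here $T_xS_x=+\mathrm{id}$ on the subbundle $T^+M$ (the ``fibre'' directions of Loos's fibration over a symmetric space), so $S$ carries no connection data along $T^+M$: the classical canonical-connection construction uses $T_xS_x=-\mathrm{id}$ and breaks down, and in the extreme case $S_x=\mathrm{id}$ every diffeomorphism is an automorphism, so no invariant connection can exist at all. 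Under the transitivity hypothesis the fibre directions are only reached by \emph{brackets} of transvection fields, and turning those into a connection (e.g.\ a Nomizu-type construction) already presupposes the finite-dimensional homogeneous structure you are trying to establish --- the argument becomes circular. Even a posteriori the model $G/K$ need not be reductive, so invariance of a connection is not something you can take for granted. Since you explicitly defer this ``delicate point'' to the connection, the proof as written does not close.

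The paper avoids rigidity and connections entirely by proving something stronger and more elementary: differentiating the identities coming from (A2)--(A3), it shows that every infinitesimal automorphism $X$ in the $(-1)$-eigenspace $\mathfrak{g}_x^-$ of $(S_x)^*$ is determined by its \emph{value} at the single point $x$, namely $X(y)=R_x(X(x))(y)$ with $R_x(X)(y)=\frac12 TS^{S_xy}X$ (a $0$-jet determinacy, not a $1$-jet one). Hence $R_x\colon T_x^-M\to\mathfrak{g}_x^-$ is an isomorphism, $[[\mathfrak{g}_x^-,\mathfrak{g}_x^-],\mathfrak{g}_x^-]\subset\mathfrak{g}_x^-$, and $\mathfrak{g}_x=\mathfrak{g}_x^-+[\mathfrak{g}_x^-,\mathfrak{g}_x^-]$ is finite-dimensional by bilinearity --- no rigidity theorem needed. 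Incidentally, the same observation repairs your rigidity argument without any connection: since $\frac{d}{dt}|_{t=0}S_{x(t)}S_x$ depends only on $x'(0)$, an automorphism $\phi$ with $\phi(x)=x$, $d\phi_x=\mathrm{id}$ satisfies $\phi_*\eta=\eta$ for every generator $\eta$, hence commutes with all transvection flows from $x$ and fixes their (open, by the orbit theorem and the hypothesis) orbit pointwise. Two further small points: your claim that $\sigma_x$ ``integrates to $h\in K$'' ignores the case where $(S_x)^*$ is an outer automorphism of $\mathfrak{g}_x$, which the paper handles by extending $G$ and $K$ by $h:=S_x$; and the harmlessness of dividing by the maximal normal subgroup of $G$ in $K$ rests on the paper's lemma that any ideal of $\mathfrak{g}_x$ contained in $[\mathfrak{g}_x^-,\mathfrak{g}_x^-]$ is central, which you would still need to prove.
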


Before we start the proof, we fix the following notation:

\begin{itemize}
\item choose $W$ as in condition (A3) in definition, and denote $W_x$ a neighborhood of $x$ such, that $W_x\times W_x\times W_x\subset W$

\item $V_x:=\{S_yz: y,z\in W_x\}$

\item we denote by $X,Y,\dots$ vector fields on $U\subset M$ and we assume, that we have chosen for any point $x,y\dots \in U$ a smooth curve $x(t),y(t),\dots$ in $U$ satisfying $x(0)=x,y(0)=y,\dots$ and $x'(0)=X(x),y'(0)=Y(y),\dots$

\item we shall write $TS_xY:=\frac{d}{dt}|_{t=0}S_xy(t)$, $TS^xY:=\frac{d}{dt}|_{t=0}S_{y(t)}x$

\item we denote $XY$ the differential operator acting on $f: U\subset M\to \mathbb{R}$ as $X(Yf)$

\item we denote $T^2S(X,Y)$ the differential operator acting on $f: U\subset M\to \mathbb{R}$ as 
\[ 
\begin{split}
(T^2S(X,Y))f(S_xy)&=\frac{d}{dt}|_{t=0}\frac{d}{ds}|_{s=0}f(S_{x(t)}y(s))\\
&=(TS_xY)(TS^yX)f(y)=(TS^yX)(TS_xY)f(x)
\end{split}
\]

\item we denote $R_x(X)$ a vector field extension of $X\in T_xM$ given by 
\[ R_x(X)(y):=\frac12 TS^{S_xy}X.\]
\end{itemize}

We see, that the axioms (A1), (A2) and (A3) are defined for all points of $W_x$ and further we shall restrict ourselves to $W_x$ if not stated otherwise.

We call a map $\phi$ defined on an interval $(a,b)\subset \mathbb{R}$ containing zero with values in the pseudogroup of locally defined diffeomorphisms of $M$ a local one parameter subgroup of local automorphisms on $W_x$, if $\phi$ satisfies:
\[ \phi_0=id_{V_x},\ \phi_{t+s}=\phi_t\circ\phi_s,\  \phi_t(S_pq)=S_{\phi_t(p)}\phi_t(q)\]
for all $p,q\in W_x$. Then we obtain an infinitesimal version of local automorphisms of local reflexion spaces by differentiation of $\phi$:

\begin{definition}
Let $(M,S)$ be a local reflexion space. We say that a vector field $X$ defined on $V_x$ is an infinitesimal automorphism if 
\[ X(S_pq)=TS_pX(q)+TS^qX(p)\]
for all $p,q\in W_x$.
\end{definition}

The following lemma shows equivalence between the local one parameter subgroups of local automorphisms and the infinitesimal automorphisms. Moreover, we obtain condition, when they are generated by reflexions:

\begin{lemma}\label{odkeqv}
Let $\phi_t$ be a local one parameter subgroup of locally defined diffeomorphisms given as a flow of some vector field $X$ on $V_x$. Then $\phi_t$ is a local one parameter subgroup of local automorphisms at $W_x$ if and only if $X$ is an infinitesimal automorphism. If $X$ is an infinitesimal automorphism and $(S_x)^*X=-X$, then 
\[S_{\phi_t(x)}S_x=\phi_{2t}.\]
\end{lemma}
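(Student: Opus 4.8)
The plan is to prove the two asserted equivalences by differentiating the automorphism identity, and then to identify the flow generated by a product of reflexions. \emph{From automorphisms to infinitesimal automorphisms:} first I would take the defining relation $\phi_t(S_p q) = S_{\phi_t(p)}\phi_t(q)$ and differentiate at $t=0$. The left-hand side yields $X(S_p q)$ since $X$ generates $\phi_t$. On the right I would split the $t$-dependence of the two arguments: varying only the reflexion point along $\phi_t(p)$ with velocity $X(p)$ contributes $TS^q X(p)$, while varying only the reflected point along $\phi_t(q)$ with velocity $X(q)$ contributes $TS_p X(q)$. Summing the two reproduces exactly the infinitesimal automorphism condition $X(S_p q) = TS_p X(q) + TS^q X(p)$.

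\emph{From infinitesimal automorphisms to automorphisms:} this direction needs an ODE-uniqueness argument. For fixed $p,q\in W_x$ I would compare the curves $A(t):=\phi_t(S_p q)$ and $B(t):=S_{\phi_t(p)}\phi_t(q)$, which agree at $t=0$. Plainly $A'(t)=X(A(t))$. For $B$ I would write $B(t+s)=S_{\phi_s(\phi_t p)}\phi_s(\phi_t q)$ using $\phi_{t+s}=\phi_t\circ\phi_s$, differentiate in $s$ at $s=0$, and apply the infinitesimal automorphism identity at the pair $(\phi_t p,\phi_t q)$ to get $B'(t)=TS_{\phi_t p}X(\phi_t q)+TS^{\phi_t q}X(\phi_t p)=X(S_{\phi_t p}\phi_t q)=X(B(t))$. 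Hence $A$ and $B$ solve the same ODE with the same initial value, so $A\equiv B$ by uniqueness, which is the automorphism property.

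\emph{Products of reflexions:} assume now $(S_x)^*X=-X$ and set $\psi_t:=S_{\phi_t(x)}S_x$, noting $\psi_0=id$ by (A2). The key computation is the generator: differentiating $\psi_t(q)$ moves only the reflexion point, so $\frac{d}{dt}|_{t=0}\psi_t(q)=TS^{S_x q}X=2R_x(X)(q)$. To evaluate $R_x(X)$ I would combine the infinitesimal automorphism identity at $p=x$, namely $X(S_x q)=TS_x X(q)+TS^q X(x)$, with the reversal relation $TS_x X(q)=-X(S_x q)$ coming from $(S_x)^*X=-X$; eliminating $TS_x X(q)$ gives $TS^q X(x)=2X(S_x q)$, and replacing $q$ by $S_x q$ together with $S_xS_x=id$ yields $R_x(X)=X$. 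Thus the generator of $\psi_t$ at $t=0$ is $2X$.

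To pass from $t=0$ to arbitrary $t$ I would use that $\phi_t$ is a local automorphism by the first part, so $S_{\phi_t(x)}=\phi_t\circ S_x\circ\phi_{-t}$; since $X$ is invariant under its own flow, $(\phi_t)^*X=X$, whence $(S_{\phi_t(x)})^*X=(\phi_{-t})^*(S_x)^*(\phi_t)^*X=-X$. Repeating the generator computation centred at $\phi_t(x)$ then gives $\frac{d}{dt}\psi_t(q)=2X(\psi_t(q))$, so $\psi_t$ is the flow of $2X$ starting at $id$; since $\phi_{2t}$ is the same flow, uniqueness gives $S_{\phi_t(x)}S_x=\phi_{2t}$. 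I expect the main obstacle to be not any single computation but the domain bookkeeping: shrinking the $t$-interval and the relevant neighbourhoods so that all iterated reflexions and flow images remain inside $W_x$, where (A1)--(A3) and the infinitesimal identity are valid and the uniqueness arguments apply.
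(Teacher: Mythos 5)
Your proof is correct, but for the final assertion you take a genuinely different route from the paper. The two equivalence directions match the paper's argument in substance: your ODE-uniqueness comparison of $A(t)=\phi_t(S_pq)$ and $B(t)=S_{\phi_t(p)}\phi_t(q)$ is just a reformulation of the paper's device of showing that $\gamma(t)=\phi_{-t}(S_{\phi_t(p)}\phi_t(q))$ is constant, and the easy direction is the same differentiation at $t=0$. The difference is in proving $S_{\phi_t(x)}S_x=\phi_{2t}$. The paper argues purely formally with conjugation of flows: from the automorphism property, $S_{\phi_t(x)}S_x=\phi_t\circ S_x\circ\phi_{-t}\circ S_x=\phi_t\circ Fl_{-t}^{(S_x)^*X}$, and $(S_x)^*X=-X$ immediately gives $\phi_t\circ\phi_t=\phi_{2t}$, with no need to compute any generator. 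You instead compute the generator of $\psi_t=S_{\phi_t(x)}S_x$ directly, which forces you to establish that $R_x(X)=X$ for an infinitesimal automorphism with $(S_x)^*X=-X$ (your elimination of $TS_xX(q)$ from the identity $X(S_xq)=TS_xX(q)+TS^qX(x)$ is correct); note that this is precisely the surjectivity computation the paper only carries out later, in part (4) of Lemma \ref{fin3l}, so you are re-deriving a subsequent result inside this proof. Both arguments are valid. The paper's conjugation trick is shorter and keeps the lemma independent of the $R_x$ machinery; your version is more computational but has the merit of making explicit the link between $\frac{d}{dt}|_{t=0}S_{\phi_t(x)}S_x$ and the operator $R_x$, which is exactly the mechanism the main theorem later exploits. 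Your closing remark about domain bookkeeping is apt: both proofs tacitly shrink the $t$-interval so that $\phi_t(p),\phi_t(q)$ stay in $W_x$, and your step applying the infinitesimal identity at the pair $(\phi_t p,\phi_t q)$, as well as the relation $(S_{\phi_t(x)})^*X=-X$ centred at the moving point, depend on this; spelling out that restriction once at the start would close the only loose end.
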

\begin{proof}
One of the implications is obvious, we prove the other one. Let 
\[ \gamma(t):=\phi_{-t}(S_{\phi_t(p)}\phi_t(q)).\]
Then 
\[ 
\begin{split}
\gamma'(t)&=-X(\gamma(t))+T\phi_{-t}(TS_pX(q)+TS^qX(p))\\
&=-X(\gamma(t))+T\phi_{-t}X(S_{\phi_t(p)}\phi_t(q))\\
&=-X(\gamma(t))+T\phi_{-t}\circ X\circ \phi_t(\gamma(t))=0.
\end{split}
\]
Thus the curve $\gamma$ is constant and 
\[ \phi_{-t}(S_{\phi_t(p)}\phi_t(q))=\gamma(0)=S_pq.\]
Then for the flow $Fl^X$ of $X$ holds
\[ 
S_{Fl_{t}^{X}(x)}S_x=Fl_{t}^{X}S_xFl_{-t}^{X}S_x=Fl_{t}^{X}Fl_{-t}^{(S_x)^*(X)}
\]
If $(S_x)^*X=-X$, then
\[ 
S_{Fl_{t}^{X}(x)}S_x=Fl_{t}^{X}Fl_{-t}^{-X}=Fl_{2t}^{X}=\phi_{2t}.
\]
\end{proof}

We see, that $R_x(X)(y)$ is a candidate for an infinitesimal automorphism. We show that this is indeed the case:

\begin{lemma}\label{fin3l}
\begin{enumerate}
\item The set $\mathcal{D}_x$ of all infinitesimal automorphisms on $V_x$ is a Lie subalgebra of the Lie algebra of vector fields on $V_x$.

\item $(S_x)^*$ is an involutive automorphism of $\mathcal{D}_x$ and we denote $\mathfrak{g}_x^-$ the $-1$ eigenspace of $(S_x)^*$.

\item Let $T_x^-M+T_x^-M$ be the decomposition of $T_xM$ with respect to the $-1$ and $1$ eigenspaces of $(S_x)^*$. Then $TM=T^-M+T^+M$ is a decomposition to subbundles, which is preserved by the local reflexions.

\item $R_x$ is an isomorphism of the vector spaces $T_x^-M$ and $\mathfrak{g}_x^-$ and for $X\in T_x^+M$ is $R_x(X)=0$. 

\item $[[\mathfrak{g}_x^-,\mathfrak{g}_x^-],\mathfrak{g}_x^-]\subset \mathfrak{g}_x^-$ and, moreover, the Lie subalgebra $\mathfrak{g}_x\subset D_x$ generated by $\mathfrak{g}_x^-$ is a finite dimension Lie subalgebra of $D_x$. In particular, $\mathfrak{g}_x=\mathfrak{g}_x^-+[\mathfrak{g}_x^-,\mathfrak{g}_x^-]$ and any ideal of $\mathfrak{g}_x$ contained in $[\mathfrak{g}_x^-,\mathfrak{g}_x^-]$ is contained in the center of $\mathfrak{g}_x$.

\item Let $\phi$ be a local automorphism given by a composition of local reflexions such, that $\phi(x)=z$. Then $T\phi: \mathfrak{g}_x \to \mathfrak{g}_z$ is an isomorphism of Lie algebras.
\end{enumerate}
\end{lemma}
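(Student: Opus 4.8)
The plan is to run all six parts off the flow--correspondence of Lemma \ref{odkeqv} together with the grading induced by the involution $(S_x)^*$. For (1), I note that the defining equation $X(S_pq)=TS_pX(q)+TS^qX(p)$ is linear in $X$, so $\mathcal{D}_x$ is a vector space; closure under the bracket I would deduce from Lemma \ref{odkeqv} by passing to flows: $X\in\mathcal{D}_x$ exactly when $Fl^X_t$ is a local one parameter subgroup of local automorphisms, and since local automorphisms form a pseudogroup closed under composition and inversion, the flow of $[X,Y]$ (a limit of commutators of the flows of $X$ and $Y$) again consists of local automorphisms, whence $[X,Y]\in\mathcal{D}_x$. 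For (2), the reflexion $S_x$ is an involutive local automorphism by (A2)--(A3), so conjugation of flows by $S_x$ shows $(S_x)^*$ maps $\mathcal{D}_x$ to itself and is an involution because $S_x\circ S_x=\mathrm{id}$; the eigenspace decomposition $\mathcal{D}_x=\mathfrak{g}_x^-\oplus\mathfrak{g}_x^+$ follows, and since $(S_x)^*$ is an algebra automorphism we get the $\mathbb{Z}_2$--grading $[\mathfrak{g}_x^\pm,\mathfrak{g}_x^\pm]\subset\mathfrak{g}_x^+$, $[\mathfrak{g}_x^+,\mathfrak{g}_x^-]\subset\mathfrak{g}_x^-$, which is the engine for (5).

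For (3) I would differentiate the conjugation relation $S_q\circ S_p=S_{S_qp}\circ S_q$ (a restatement of (A3)) at the fixed point $p$: this shows $TS_q$ intertwines the involution $TS_p$ on $T_pM$ with the involution $TS_{S_qp}$ on $T_{S_qp}M$, hence carries $T_p^\pm M$ isomorphically onto $T_{S_qp}^\pm M$. Smoothness of $S$ makes $p\mapsto TS_p$ a smooth family of involutions, and the transport just described forces the eigenbundle ranks to be locally constant, giving the smooth subbundle decomposition $TM=T^-M\oplus T^+M$ preserved by the reflexions. For (4), differentiating $S_{x(t)}x(t)=x(t)$ gives $TS^xX+TS_xX=X$, so $R_x(X)(x)=\tfrac12 TS^xX=\tfrac12\bigl(X-TS_xX\bigr)$, which equals $X$ on $T_x^-M$ and $0$ on $T_x^+M$. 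Writing $V_X:=2R_x(X)=\tfrac{d}{dt}\big|_{0}\,S_{x(t)}\circ S_x$ and using that $S_x\circ S_{x(t)}=(S_{x(t)}\circ S_x)^{-1}$, together with (A3), I obtain $V_{TS_xX}=-V_X$ for all $X$; this simultaneously shows $R_x(X)=0$ on $T_x^+M$ and $(S_x)^*R_x(X)=-R_x(X)$, so $R_x$ lands in $\mathfrak{g}_x^-$. Injectivity on $T_x^-M$ is immediate from $R_x(X)(x)=X$, and surjectivity follows because for $Z\in\mathfrak{g}_x^-$ the field $Z-R_x(Z(x))$ lies in $\mathfrak{g}_x^-$ and vanishes at $x$; by Lemma \ref{odkeqv} its flow satisfies $S_{\phi_t(x)}S_x=\phi_{2t}=\mathrm{id}$, so it is zero.

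For (5), the bracket relation $[[\mathfrak{g}_x^-,\mathfrak{g}_x^-],\mathfrak{g}_x^-]\subset[\mathfrak{g}_x^+,\mathfrak{g}_x^-]\subset\mathfrak{g}_x^-$ is read off the grading, and a short Jacobi computation then verifies that $\mathfrak{g}_x^-+[\mathfrak{g}_x^-,\mathfrak{g}_x^-]$ is a subalgebra, necessarily the one generated by $\mathfrak{g}_x^-$. For the ideal claim, if $\mathfrak{i}\subseteq[\mathfrak{g}_x^-,\mathfrak{g}_x^-]\subset\mathfrak{g}_x^+$ is an ideal, then $[\mathfrak{i},\mathfrak{g}_x^-]\subset\mathfrak{i}\cap\mathfrak{g}_x^-\subset\mathfrak{g}_x^+\cap\mathfrak{g}_x^-=0$, and since $\mathfrak{g}_x^-$ generates $\mathfrak{g}_x$ this forces $[\mathfrak{i},\mathfrak{g}_x]=0$, i.e. $\mathfrak{i}$ is central. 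Finiteness of $\dim\mathfrak{g}_x$ I would reduce as follows: $\dim\mathfrak{g}_x^-=\dim T_x^-M<\infty$, and the map $A\mapsto\mathrm{ad}(A)|_{\mathfrak{g}_x^-}\in\mathfrak{gl}(\mathfrak{g}_x^-)$ on $[\mathfrak{g}_x^-,\mathfrak{g}_x^-]$ has finite-dimensional target and kernel exactly the central part. Identifying this kernel is the main obstacle I expect: for a central $A$ with $A(x)=0$ the computation $[A,R_x(X)](x)=-(\text{linearisation of }A\text{ at }x)\cdot X$ only controls the linearisation of $A$ along $T_x^-M$, so pinning down the center requires showing that an infinitesimal automorphism is rigid, i.e. determined by its finite jet at $x$. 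I would establish this by propagating the defining identity along the transvection flows $S_{\phi_t(x)}S_x=\phi_{2t}$ of Lemma \ref{odkeqv}, which turns the automorphism equation into a uniqueness statement for the induced ODE and forces such an $A$ to vanish; this bounds the center inside a finite-dimensional jet space and completes the dimension count.

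Finally, for (6) I observe that a composition $\phi$ of local reflexions is a local automorphism, so by (A3) it conjugates reflexions, $\phi\circ S_x=S_{\phi(x)}\circ\phi=S_z\circ\phi$; consequently the pushforward $T\phi$ carries $\mathcal{D}_x$ to $\mathcal{D}_z$, is a Lie algebra homomorphism, and intertwines $(S_x)^*$ with $(S_z)^*$, hence maps $\mathfrak{g}_x^-$ isomorphically onto $\mathfrak{g}_z^-$. Since $\mathfrak{g}_x$ and $\mathfrak{g}_z$ are generated by these $-1$ eigenspaces, $T\phi$ restricts to a Lie algebra isomorphism $\mathfrak{g}_x\to\mathfrak{g}_z$, as claimed.
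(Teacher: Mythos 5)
The one genuine gap is in your part (5). You treat finite-dimensionality of $\mathfrak{g}_x$ as hinging on controlling the kernel of $A\mapsto\mathrm{ad}(A)|_{\mathfrak{g}_x^-}$ on $[\mathfrak{g}_x^-,\mathfrak{g}_x^-]$, and you defer that to a ``rigidity'' argument (propagation along transvections, jet/ODE uniqueness) that you flag yourself and never carry out; worse, your stated goal --- forcing a central $A$ to \emph{vanish} --- is stronger than the lemma and false in general, since the statement expressly allows ideals inside $[\mathfrak{g}_x^-,\mathfrak{g}_x^-]$ that are central but nonzero (think of a model built on the Heisenberg algebra with the involution $X,Y\mapsto -X,-Y$, $Z\mapsto Z$). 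The obstacle you expect is illusory: once (4) gives the isomorphism $\mathfrak{g}_x^-\cong T_x^-M$, the space $[\mathfrak{g}_x^-,\mathfrak{g}_x^-]$ is by definition the span of brackets of elements of a finite-dimensional space, hence is spanned by the finitely many brackets $[R_x(E_i),R_x(E_j)]$ of a basis $E_1,\dots,E_n$ of $T_x^-M$ --- this is exactly what the paper's remark that $[[R_x(X),R_x(Y)],R_x(Z)]$ is ``linear in all entries'' encodes. Combined with $[[\mathfrak{g}_x^-,\mathfrak{g}_x^-],\mathfrak{g}_x^-]\subset\mathfrak{g}_x^-$ and the Jacobi identity (showing $\mathfrak{g}_x^-+[\mathfrak{g}_x^-,\mathfrak{g}_x^-]$ is closed under the bracket), this yields $\dim\mathfrak{g}_x\le n+\binom{n}{2}$ with no jet analysis at all. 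Your grading argument for the ideal claim ($[\mathfrak{i},\mathfrak{g}_x^-]\subset\mathfrak{i}\cap\mathfrak{g}_x^-=0$, so $\mathfrak{i}$ commutes with the generators) is correct and in fact more explicit than the paper's.

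A second, smaller omission sits in (4): anti-invariance $(S_x)^*R_x(X)=-R_x(X)$ does not by itself place $R_x(X)$ in $\mathfrak{g}_x^-$; you must also verify $R_x(X)\in\mathcal{D}_x$, i.e.\ the infinitesimal automorphism identity, which the paper gets by differentiating $S_{x(t)}S_yz=S_{S_{x(t)}y}S_{x(t)}z$. In your framework it follows by differentiating the automorphism property of the curve $t\mapsto S_{x(t)}\circ S_x$ through the identity --- the same principle you already invoke in (1) --- but it is genuinely needed, both for ``$R_x$ lands in $\mathfrak{g}_x^-$'' and for your surjectivity argument, which uses $Z-R_x(Z(x))\in\mathfrak{g}_x^-$ before applying Lemma \ref{odkeqv}. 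With these repairs your flow-theoretic route is sound and differs pleasantly from the paper: where the paper proves (1), (2), (4) and (6) by direct second-order computations with $T^2S$ and differentiated (A3) identities, you run everything through Lemma \ref{odkeqv}; in particular your surjectivity proof (the field $Z-R_x(Z(x))$ vanishes at $x$, so $S_{\phi_t(x)}S_x=\phi_{2t}=\mathrm{id}$ kills it) is an attractive alternative to the paper's pointwise identity $X(y)=R_x(X(x))(y)$. Just be careful in (1) that the commutator-of-flows statement is about a one-sided derivative at $t=0$ (the flow of $[X,Y]$ is \emph{not} literally a limit of commutators of flows); the clean version is to note that $(Fl^X_{-t})_*Y\in\mathcal{D}_x$ and that the defining identity, being pointwise linear and continuous in the field, passes to the derivative in $t$.
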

\begin{proof}
(1) For $Y\in \mathcal{D}_x$, 
\[ \begin{split}
(TS^qP)(Y)(S_pq)&=(TS^qP)(TS_pY)(q)+(TS^qP)(TS^qY)(p)\\
&=T^2S(P,Y)(S_pq)+TS^q(PY)(p)
\end{split}\] 
and in the same way obtain
\[ (TS_pQ)Y(S_pq)=T^2S(Y,Q)(S_pq)+TS_p(QY)(q).\]
For $X,Y\in \mathcal{D}_x$,
\[ 
\begin{split}
[X,Y](S_pq)&=XY(S_pq)-YX(S_pq)\\
&=(TS_pX(q)+TS^qX(p))Y(S_pq)-(TS_pY(q)+TS^qY(p))X(S_pq)\\
&=TS^q(XY)(p)+T^2S(X,Y)(S_pq)+TS_p(XY)(q)+T^2S(Y,X)(S_pq)\\
&-TS^q(YX)(p)-T^2S(Y,X)(S_pq)-TS_p(YX)(q)-T^2S(X,Y)(S_pq)\\
&=TS^q[X,Y](p)+TS_p[X,Y](q)
\end{split}
\]
i.e. we have shown that $[X,Y]\in \mathcal{D}_x$.

(2) Differentiating $S_{x}S_yz(t)=S_{S_{x}y}S_{x}z(t)$ we obtain 
\[ TS_xTS_yZ=TS_{S_xy}TS_xZ\]
and differentiating $S_{x}S_{y(t)}z=S_{S_{x}y(t)}S_{x}z$ we obtain
\[TS_xTS^zY=TS^{S_xz}TS_xY\]
Then for $X\in \mathcal{D}_x$ 
\[ 
\begin{split}
(TS_x\circ X(S_pq)\circ S_x)&=TS_xX(S_xS_pq)=TS_xX(S_{S_xp}{S_xq})\\
&=TS_x TS^{S_xq}X(S_xp)+TS_xTS_{S_xp}X(S_xq)\\
&=TS^q TS_xX(S_xp)+TS_pTS_xX(S_xq)\\
&=TS_q(TS_x\circ X\circ S_x)(p)+TS_p(TS_x\circ X\circ S_x)(q),
\end{split}\]
i.e. we have shown that $(S_x)^*$ is an automorphism of $\mathcal{D}_x$. Differentiating $S_{x}S_{x}y(t)=y(t)$ we obtain
\[(TS_x)^2Y=Y.\]
Thus $(S_x)^*|_{T_xM}=TS_x$ has only eigenvalues $\pm 1$ and, since $S_x^2=id$, $((S_x)^*)^2=id$.

(3) Differentiating $S_{x(t)}x(t)=x(t)$ we obtain
\[ TS_xX+TS^xX=X(x).\]
Thus $TS^x$ is a projection from $T_xM\to T_x^-M$ with kernel $T_x^+M$ and $T^-M+T^+M$ is a decomposition of $TM$ to subbundles. Further, we have shown that $TS_y (TS_xX)=TS_{S_yx}(TS_yX)$, so we see, that the reflexions preserve the decomposition $TM=T^-M+T^+M$.

(4) Differentiating $S_{S_{x(t)}y}z=S_{x(t)}S_yS_{x(t)}z$ we obtain 
\[ TS^zTS^yX=TS^{S_yS_xz}X+TS_xTS_yTS^zX\]
and differentiating $S_{x(t)}S_{x(t)}y=y$ we obtain
\[ TS^{S_xy}X+TS_xTS^yX=0\]
So 
\[ 
\begin{split}
R_x(TS^x(X))(y)&=\frac14 TS^{S_xy}TS^xX\\
&=\frac14 TS^{S_xS_xS_xy}X+\frac14 TS_xTS_xTS^yX=R_x(X)(y),
\end{split}\]
Thus for $X\in T^+M$ we obtain $R_x(X)=0$.

Next we show $R_x(X)\in \mathfrak{g}_x^-$. Differentiating $S_{x(t)}S_yz=S_{S_{x(t)}y}S_{x(t)}z$ we obtain 
\[TS^{S_yz}X=TS^{S_xz}TS^yX+TS_{S_xy}TS^zX.\]
Thus 
\[ 
\begin{split}
2R_x(X)(S_yz)&=TS^{S_xS_yz}X=TS^{S_{S_xy}S_xz}X\\
&=TS^{S_xS_xz}TS^{S_xy}X+TS_{S_xS_xy}TS^{S_xz}X\\
&=2(TS^zR_x(X)(y)+TS_yR_x(X)(z)),
\end{split}\]
and
\[ 2(TS_x\circ R_x(X)\circ S_x)(y)=TS_xTS^{S_xS_xy}X=-TS^{S_xy}X=-2R_x(X)(y).\]
Since $R_x(X)(x)=TS^x(X)$, the map $R_x$ is injective. 

Differentiating $S_{x}S_{y(t)}z=S_{S_{x}y(t)}S_{x}z$ we obtain 
\[TS_xTS^zY=TS^{S_xz}TS_xY.\]
Then for $X\in \mathfrak{g}_x^-$, we may conclude:
\[
\begin{split}
-X(y)&=TS_x\circ X(y) \circ S_x=TS_xX(S_xy)\\
&=TS_xTS^yX(x)+TS_xTS_xX(y)\\
&=TS^{S_xy}TS_xX(x)+X(y)=-TS^{S_xy}X(x)+X(y).
\end{split}\]
Thus $X(y)=R_x(X(x))(y)$ and $R_x$ is surjective.

(5) Since
\[
\begin{split}
(S_x)^*([[R_x(X),R_x(Y)],R_x(Z)])&=[[(S_x)^*(R_x(X)),(S_x)^*(R_x(Y))],(S_x)^*(R_x(Z))]\\
&=-[[R_x(X),R_x(Y)],R_x(Z)],
\end{split}
\]
we get $[[\mathfrak{g}_x^-,\mathfrak{g}_x^-],\mathfrak{g}_x^-]\in \mathfrak{g}_x^-$. Further, $[[R_x(X),R_x(Y)],R_x(Z)]$ is linear in all entries, thus the Lie algebra $\mathfrak{g}_x$ generated by $\mathfrak{g}_x^-$ is finite dimensional and $\mathfrak{g}_x=\mathfrak{g}_x^-+[\mathfrak{g}_x^-,\mathfrak{g}_x^-]$. From the isomorphism $\mathfrak{g}_x^-=T_x^-M$ we get $[\mathfrak{g}_x^-,\mathfrak{g}_x^-]\subset End(T_x^-M)$ and any ideal of $\mathfrak{g}_x$ contained in $[\mathfrak{g}_x^-,\mathfrak{g}_x^-]$ is contained in center of $\mathfrak{g}_x$.

(6) $T\phi$ induces a vector space isomorphism between $T_x^-M$ and $T_z^-M$. Since 
\[ 
\begin{split}
T\phi X(S_pq)&=T\phi(TS^{q}X(p)+TS_{p}X(q))\\
&=TS^{\phi(q)} T\phi(X)(\phi(p))+TS_{\phi(p)}T\phi(X)(\phi(q)),
\end{split}
\]
it maps $\mathfrak{g}^-_x$ to $\mathfrak{g}^-_{z}$. Further,
\[ 2(TS_w\circ R_x(X)\circ S_w)(y)=TS_wTS^{S_xS_wy}X=TS^{S_{S_wx}y}T_xS_w(X)=2R_{S_wx}(T_xS_w(X))(y).\]
Thus, if $\phi$ is a composition of local reflexions, then it is compatible with the Lie bracket of vector fields and the claim follows. 
\end{proof}

Thus if $\mathfrak{g}_x(x)=T_xM$ for any $x\in M$, the previous two lemmas show, that there are infinitesimal automorphisms in all directions. Now we need a version of the Lie second fundamental theorem for this situation:

\begin{lemma}\label{fin1}
Let $\mathfrak{g}$ be a finite dimensional Lie subalgebra of Lie algebra $\mathcal{D}_x$. Then there is a connected, simply connected Lie group $G$ with the Lie algebra $\mathfrak{g}$, an open subset $U\in G$ and a local left action $l: U\times W_x \to M$.
\end{lemma}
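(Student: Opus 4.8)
The plan is to prove a local version of Lie's second and third fundamental theorems: the inclusion $\mathfrak{g}\hookrightarrow\mathcal{D}_x\subset\mathfrak{X}(V_x)$ of a finite dimensional Lie algebra into the vector fields on $V_x$ integrates to a local action of the associated simply connected group. First I would invoke Lie's third theorem to produce a connected, simply connected Lie group $G$ whose Lie algebra is identified with $\mathfrak{g}$; this existence statement uses nothing about $M$. The remaining content is to manufacture the local action $l$ out of the infinitesimal data, that is, out of the fact that $\mathfrak{g}$ is realized as a finite dimensional Lie algebra of vector fields on $V_x$.

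The key step is a Frobenius argument on the product $G\times V_x$. I would fix a basis $e_1,\dots,e_n$ of $\mathfrak{g}$ with structure constants $c_{ij}^k$, let $L_i$ be the corresponding left-invariant vector fields on $G$ (so that $[L_i,L_j]=\sum_k c_{ij}^k L_k$), and regard the basis elements themselves as vector fields $V_i:=e_i\in\mathfrak{X}(V_x)$, which satisfy the same relations $[V_i,V_j]=\sum_k c_{ij}^k V_k$ since $\mathfrak{g}$ is a Lie subalgebra. On $G\times V_x$ I would then form the vector fields $W_i:=L_i\oplus V_i$. Because the lifts of vector fields from the two factors commute, $[W_i,W_j]=[L_i,L_j]\oplus[V_i,V_j]=\sum_k c_{ij}^k W_k$, so the $W_i$ span an involutive distribution $\Delta$ of constant rank $n=\dim\mathfrak{g}$; the rank is constant because the $L_i$ already form a global frame of $TG$, whence the $W_i$ are everywhere linearly independent. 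By the Frobenius theorem $G\times V_x$ is foliated by integral leaves of $\Delta$.

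From the foliation I would extract the action. Under the first projection $G\times V_x\to G$ the differential maps $W_i\mapsto L_i$, so it restricts to an isomorphism on each fibre of $\Delta$; hence every leaf is locally a graph over $G$. In particular the leaf through $(e,y)$ is, near that point, the graph of a smooth map $\sigma_y:U_y\to V_x$ with $\sigma_y(e)=y$, defined on some open $U_y\ni e$ in $G$. Setting $l(g,y):=\sigma_y(g)$ and shrinking to a common open $U\ni e$ and to $W_x$ so that all these graphs are simultaneously defined produces $l:U\times W_x\to M$. The flow of $W_i$ through $(e,y)$ is $(\exp(te_i),Fl_t^{V_i}(y))$, which recovers the infinitesimal generators; and because left-invariance of the $L_i$ makes $\Delta$ invariant under the translations $(g,m)\mapsto(hg,m)$, these translations permute leaves, which is exactly what yields the composition identity $l(gh,y)=l(g,l(h,y))$ where both sides are defined. (Left-invariant fields produce the opposite-handed identity $l(gh,y)=l(h,l(g,y))$; composing with $g\mapsto g^{-1}$, or equivalently using the invariance of $\Delta$ under the correct translations, gives a genuine \emph{left} action, the sign in the resulting Lie algebra homomorphism being the usual convention for fundamental vector fields.)

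The main obstacle I expect is the bookkeeping of domains rather than any deep difficulty. Since the $V_i$ are only defined on the chart $V_x$, the leaves need not project onto all of $G$, so $U_y$ and the graphs must be controlled and intersected; likewise the associativity identity only makes sense after shrinking $U$ and $W_x$ so that the iterated compositions stay inside the domain where $S$ and the $V_i$ live. All of this is handled by repeatedly passing to smaller neighborhoods of $e\in G$ and of $x\in M$, and once the domains are fixed consistently the verification that $l$ is a local left action whose generators are the elements of $\mathfrak{g}$ is routine.
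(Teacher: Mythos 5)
Your proposal is correct and takes essentially the same route as the paper: the paper integrates exactly the same distribution $(L_X,X)$ on $G\times V_x$, phrased as a flat connection on the trivial bundle $pr_1\colon G\times V_x\to G$ whose parallel transport (homotopy-invariant by flatness) plays the role of your leaf-graphs $\sigma_y$, and it does the same domain bookkeeping by shrinking to $U=\exp(\bar U)$ and $W_x$. The only place you are more careful than the paper is the left-versus-right handedness of the resulting action, which the paper passes over by simply asserting that the construction yields a left action.
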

\begin{proof}
The lemma is a local version of \cite{odk5}[Lemma 2.3]. For the convenience of the reader we include the proof below. The details on the parallel transport can be found in \cite{odk4}[Chapter 9].

Let $G$ be a connected, simply connected Lie group with Lie algebra $\mathfrak{g}$. There is the integrable distribution $(L_X,X)$ on $G\times V_x$, where $L_X$ is a left invariant vector field corresponding to $X\in \mathfrak{g}$. We will denote $L(y)$ the leaf through $(e,y)$. The $pr_1: G\times V_x\to G$ is a trivial fibre bundle with a flat connection (for a horizontal distribution given by $(L_X,X)$). Further, $pr_1|_{L(y)}$ is a local diffeomorphism onto an open neighborhood $Q(y)$ of $e$ in $G$. 

We will use the parallel transport $Pt(c,(g,y),t)$ with respect to the flat connection. For a curve $c:(a,b)\to G, c(0)=g$, $Pt(c,(g,y),t)$ is defined on some neighborhood $V$ of ${g}\times V_x\times {0}$ in ${g}\times V_x\times \mathbb{R}$.

Let $c: [0,1]\to Q(y)$ be a piecewise smooth curve with $c(0)=e$. Since ${e}\times{y}\times [0,1]\subset V$, then $Pt(c,(e,.),0)$ is defined for points in a open subset $U(y)$ containing $y$ and is a diffeomorphism of ${c(0)}\times U(y)$ onto its image ${c(1)}\times U'$. We choose $U(y)$ maximal with this property. Since the connection is flat, the parallel transport depends on the homotopy classes of the curve $c$ (with fixed end points). Thus, $Pt(c,(e,.),0)$ defines the map $\gamma_y(c):=pr_2\circ Pt(c,(e,.),0): U(y)\to U'$.

Now let $\bar{V}$ be a neighborhood of $0$ in $\mathfrak{g}$ such, that $Fl_1^X(y)$ is defined for all $y\in W_x$ for $X\in \bar{V}$. Then there is $\bar{U}\subset \bar{V}$ such, that $U=\exp(\bar{U})\subset Q(y)$ for all $y\in W_x$. Thus $Pt(c,(c(0),.),0)$ is defined for all $y\in W_x$ and for all $c: [0,1]\to U$.

We define the local left action $l: U\times W_x \to M$ as $l(g,y)=\gamma_x(c)(y)$, where $c$ is a piecewise smooth curve with $c(0)=e$ and $c(1)=g$. Obviously, the definition is correct and it is a left action. Indeed $l(\exp(tX),y)=Fl_t^X(y)$ is the local one parameter group of local automorphisms generated by $X\in \mathfrak{g}$.
\end{proof}

As a corollary of the lemmas \ref{odkeqv}, \ref{fin3l} and \ref{fin1} we get the following:

\begin{dus}\label{ltran}
If there is $x\in M$ such that $\mathfrak{g}_x(x)=T_xM$, then the pseudogroup of locally defined diffeomorphisms generated by pairs of local reflexions acts transitively on $M$ and locally is generated by $\mathfrak{g}_x$.
\end{dus}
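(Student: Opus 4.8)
The plan is to feed the hypothesis through the three preceding lemmas in sequence: Lemma~\ref{fin3l} turns the pointwise condition $\mathfrak{g}_x(x)=T_xM$ into the data of a finite dimensional, infinitesimally transitive Lie algebra of infinitesimal automorphisms; Lemma~\ref{fin1} integrates that algebra to a local group action; and Lemma~\ref{odkeqv} identifies the flows coming from the $-1$-eigenspace with compositions of two reflexions. First I would record that $\mathfrak{g}_x$ is finite dimensional by Lemma~\ref{fin3l}(5), so Lemma~\ref{fin1} applies and yields a connected, simply connected Lie group $G$ with $\mathrm{Lie}(G)=\mathfrak{g}_x$, an open $U\subset G$, and a local left action $l:U\times W_x\to M$. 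For $X\in\mathfrak{g}_x^-$ we have $(S_x)^*X=-X$, so by Lemma~\ref{odkeqv} the flow satisfies $S_{Fl^X_t(x)}S_x=Fl^X_{2t}$; hence every one-parameter subgroup generated by an element of $\mathfrak{g}_x^-$ is a composition of two local reflexions. Since $\mathfrak{g}_x^-$ generates $\mathfrak{g}_x$ and $G$ is connected, a neighbourhood of the identity in $G$ is a product of such one-parameter subgroups, so near $x$ the pseudogroup generated by pairs of reflexions coincides with the local action $l$. This is precisely the statement that the pseudogroup is locally generated by $\mathfrak{g}_x$.

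Next I would show that the orbit $\Omega$ of $x$ under this pseudogroup is open. The orbit map $g\mapsto l(g,x)$ has differential $X\mapsto X(x)$ at $e$, which is onto $T_xM$ by hypothesis; thus it is a submersion at $e$ and its image, contained in $\Omega$, is a neighbourhood of $x$. For any $z\in\Omega$ I choose a composition of reflexions $\phi$ with $\phi(x)=z$; by Lemma~\ref{fin3l}(6) the induced map $T\phi:\mathfrak{g}_x\to\mathfrak{g}_z$ is a Lie algebra isomorphism, and since $T\phi(Y)(z)=T_x\phi(Y(x))$ we obtain
\[ \mathfrak{g}_z(z)=T_x\phi(\mathfrak{g}_x(x))=T_x\phi(T_xM)=T_zM. \]
Running the submersion argument at $z$ then shows that $\Omega$ contains a neighbourhood of each of its points, so $\Omega$ is open; the same computation shows that the set $\Sigma:=\{z\in M:\mathfrak{g}_z(z)=T_zM\}$ is open and is a union of open pseudogroup orbits. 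Because the reflexions are involutive, reachability by pairs of reflexions is an equivalence relation, so the orbits partition $M$.

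Transitivity now amounts to showing there is a single orbit, i.e. $\Omega=M$. Since the orbits meeting $\Sigma$ are open and partition $\Sigma$, and $M$ is connected, it suffices to prove $\Sigma=M$, which by connectedness reduces to showing that $\Sigma$ is closed. \textbf{This closedness is the step I expect to be the main obstacle.} Pointwise, $z\mapsto\mathfrak{g}_z(z)$ is spanned by the finitely many smooth vector fields $R_z(X_i)$ (a frame of $T^-M$) together with their brackets, so $\dim\mathfrak{g}_z(z)$ is only \emph{lower} semicontinuous; a priori the rank could drop at a boundary point $w\in\partial\Omega$, producing a singular, lower dimensional orbit through $w$ that $\Omega$ merely accumulates on. Note also that single reflexions do not help directly: differentiating $w\mapsto S_wz$ gives only the $T^-_zM$-directions, so one reflexion propagates the full-rank condition only along $T^-M$, and the transverse $T^+M$-directions are recovered solely from brackets, i.e. from within $\Omega$ itself.

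To overcome the obstacle I would exploit the rigidity built into the reflexions. By Lemma~\ref{fin3l}(3) the eigenbundles $T^-M$ and $T^+M$ are genuine subbundles of constant rank, so $\mathfrak{g}_z^-(z)=T_z^-M$ already has constant rank and only $[\mathfrak{g}_z^-,\mathfrak{g}_z^-](z)\subseteq T_z^+M$ can vary; and by the conjugation formula in the proof of Lemma~\ref{fin3l}(6), $(S_w)^*$ carries $\mathfrak{g}_z$ isomorphically onto $\mathfrak{g}_{S_wz}$, so the rank of $z\mapsto\mathfrak{g}_z(z)$ is genuinely constant along every orbit. The aim is to promote this to constancy of the rank on a full neighbourhood of each point, using the constant rank of $T^\pm M$ together with a uniform continuity argument near $\partial\Omega$: the reflexions are defined uniformly on a neighbourhood of the diagonal, and I would use this to bound the transverse bracket-rank from below near $w$, thereby upgrading $z\mapsto\mathfrak{g}_z(z)$ to a constant-rank involutive generalized distribution whose leaves are exactly the pseudogroup orbits. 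Constant rank makes $\Sigma$ both open and closed, hence $\Sigma=M$ by connectedness, so all orbits are open; as open sets partitioning a connected manifold they collapse to one, giving $\Omega=M$ and the desired transitivity.
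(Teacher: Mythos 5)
Your assembly of the three lemmas is exactly the argument the paper intends: finite dimensionality of $\mathfrak{g}_x$ from Lemma \ref{fin3l}(5), the local action from Lemma \ref{fin1}, the identity $S_{Fl_t^X(x)}S_x=Fl_{2t}^X$ from Lemma \ref{odkeqv} for $X\in\mathfrak{g}_x^-$, the fact that $\mathfrak{g}_x^-$ generates $\mathfrak{g}_x$, and the propagation of the full-rank condition along an orbit via Lemma \ref{fin3l}(6). Indeed the paper offers no written proof at all: the corollary is presented as an immediate consequence of Lemmas \ref{odkeqv}, \ref{fin3l} and \ref{fin1}. So for the clause ``locally is generated by $\mathfrak{g}_x$'' and for the openness of the orbit of $x$ (and of every orbit meeting $\Sigma=\{z:\mathfrak{g}_z(z)=T_zM\}$), your argument is complete and coincides with the intended one.

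The gap is exactly where you flag it, and your final paragraph does not close it: ``a uniform continuity argument \dots to bound the transverse bracket-rank from below'' is a hope, not a proof, and your own preceding observations explain why no soft argument can succeed --- the rank of $z\mapsto\mathfrak{g}_z(z)$ is only lower semicontinuous, and both single reflexions and pairs of reflexions move a boundary point $w$, to first order, only in $T_w^-M$-directions, so nothing stated so far excludes an open orbit accumulating on a lower-dimensional one (compare the affine group acting on $\mathbb{R}$, where a finite-dimensional algebra of vector fields has an open orbit $(0,\infty)$ accumulating on the fixed point $0$). Ruling this out would require a genuine rigidity statement, e.g.\ that axiom (A3) forces the bracket ``curvature'' $[\mathfrak{g}_z^-,\mathfrak{g}_z^-](z)\subset T_z^+M$ to have locally constant rank, and neither you nor the paper proves such a statement. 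To be fair, this means your attempt and the paper are in the same position: with the hypothesis ``there is $x\in M$'' the corollary is unproved in the paper as well. In the paper's only application (the proof of the main theorem) the hypothesis $\mathfrak{g}_x(x)=T_xM$ is assumed at \emph{every} point of $M$, and there your own argument already finishes the job: every orbit is open, the orbits partition the connected manifold $M$, hence there is exactly one orbit. So under the ``for all $x$'' reading your proof is complete and matches the paper; under the literal ``there is $x$'' reading both texts share the same unfilled hole, and you deserve credit for being the only one to notice it.
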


Now we can prove the main theorem:

\begin{proof}
Lemma \ref{odkeqv} and corollary \ref{ltran} imply, that $\mathfrak{g}_x$ are isomorphic Lie algebras for all $x\in M$, and there are local actions of $G$ from lemma \ref{fin1} around all points. We denote $K$ the connected component of identity of stabilizer of some point $x\in M$. We have shown that maximal normal subgroup of $G$ contained in $K$ is contained in center of $G$ and we factor out this part to satisfy condition (H3). The local actions of $G$ provide an atlas of $M$ such that the images of charts are open subsets of $G/K$ and transition functions are elements of $G$. If we glue the pullbacks of restrictions of the images of the charts in $G\to G/K$ using the same transition functions, we get principal $K$-bundle over $M$. The pullbacks of the Maurer Cartan form restricted to those pieces can be glued together to a Cartan connection on this $K$-bundle. Thus we get a locally flat Cartan geometry of type $(G,K)$.

Now $G/K$ is a connected, simply connected and $S_x$ acts as an automorphism on $G$. If it is not an inner automorphism, we can extend $G$ and $K$ by $h:=S_x$ and $(G,K)$ still satisfies (H3). Clearly $h$ satisfies (H1) and (H2). It is obvious that the local reflexions are equivalent to those defined in the first example.
\end{proof}

\end{document}